\newtheorem{theorem}{Theorem}[section]
\newtheorem{definition}[theorem]{Definition}
\newtheorem{problem}[theorem]{Problem}
\title{This is the title}
\begin{document}
\hrule\hrule\hrule\hrule\hrule
\vspace{0.3cm}	
\begin{center}
{\bf{Noncommutative Equiangular Lines: van Lint-Seidel Relative and Gerzon Universal Bounds}}\\
\vspace{0.3cm}
\hrule\hrule\hrule\hrule\hrule
\vspace{0.3cm}
\textbf{K. Mahesh Krishna}\\
School of Mathematics and Natural Sciences\\
Chanakya University Global Campus\\
NH-648, Haraluru Village\\
Devanahalli Taluk, 	Bengaluru  Rural District\\
Karnataka State, 562 110, India \\
Email: kmaheshak@gmail.com\\

Date: \today
\end{center}

\hrule\hrule
\vspace{0.5cm}
\textbf{Abstract}: We introduce the notion of noncommutative equiangular lines and derive noncommutative versions of  fundamental van Lint-Seidel relative and Gerzon universal bounds.

\textbf{Keywords}: Equiangular lines, C*-algebra, Hilbert C*-module.

\textbf{Mathematics Subject Classification (2020)}: 46L05, 46L08.\\

\hrule

\hrule
\section{Introduction}
Let $d \in \mathbb{N}$ and $\gamma \in  [0,1]$. Recall that  a collection $\{\tau_j\}_{j=1}^n$  of unit vectors  in $\mathbb{R}^d$ or $\mathbb{C}^d$	is said to be \textbf{$\gamma$-equiangular  lines} \cite{LEMMENSSEIDEL, HAANTJES}  if 
\begin{align*}
	|\langle \tau_j, \tau_k\rangle|=\gamma, \quad \forall 1\leq j, k \leq n, j \neq k.
\end{align*}	
Two fundamental problems associated with equiangular lines are the following.
\begin{problem}\label{1} 
	Given $d \in \mathbb{N}$ and $\gamma \in  [0,1]$, what is the upper bound on $n$ such that there exists a collection $\{\tau_j\}_{j=1}^n$  of  $\gamma$-equiangular lines in $\mathbb{R}^d$?
\end{problem}
\begin{problem}\label{2} 
	For which parameters $d, n \in \mathbb{N}$ and $\gamma \in  [0,1]$, there exists a collection $\{\tau_j\}_{j=1}^n$  of  $\gamma$-equiangular lines in $\mathbb{R}^d$? If exists, how to construct them?
\end{problem}
Two  answers to Problem (\ref{1}) which are   fundamental driving force in the study of equiangular lines are the following results of van Lint and Seidel \cite{VANLINTSEIDEL, LEMMENSSEIDEL} and Gerzon \cite{WALDONBOOK}.
\begin{theorem} \cite{VANLINTSEIDEL, LEMMENSSEIDEL, GLAZYRINYU, GODSILROYLE} \label{VS} (\textbf{van Lint-Seidel Relative Bound}) 
	Let $\{\tau_j\}_{j=1}^n$  	be  $\gamma$-equiangular  lines in $\mathbb{R}^d$ or $\mathbb{C}^d$. 
	Then 
	\begin{align*}
		n(1-d\gamma^2) \leq d(1-\gamma^2).
	\end{align*}
	In particular, if 
	\begin{align*}
		\gamma < \frac{1}{\sqrt{d}},
	\end{align*}
	then 
	\begin{align}\label{LS}
		n\leq \frac{d(1-\gamma^2)}{1-d\gamma^2}.
	\end{align}
\end{theorem}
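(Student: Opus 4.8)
The plan is to encode each line as a rank-one orthogonal projection and then exploit the rigidity of the trace on a $d$-dimensional space. First I would let $P_j$ denote the orthogonal projection onto the span of $\tau_j$, so that $P_j = \tau_j\tau_j^*$ as a $d\times d$ matrix. Since $\|\tau_j\|=1$, each $P_j$ is self-adjoint and idempotent with $\mathrm{Tr}(P_j)=1$, and a short computation using cyclicity of the trace gives $\mathrm{Tr}(P_jP_k)=\abs{\langle\tau_j,\tau_k\rangle}^2$. The equiangularity hypothesis then translates into $\mathrm{Tr}(P_jP_k)=\gamma^2$ for all $j\neq k$, while $\mathrm{Tr}(P_j^2)=\mathrm{Tr}(P_j)=1$ on the diagonal.

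Next I would form the positive semidefinite operator $S=\sum_{j=1}^n P_j$ and compute its first two trace moments. Summing the diagonal entries gives $\mathrm{Tr}(S)=n$, and expanding the square yields
\begin{align*}
\mathrm{Tr}(S^2)=\sum_{j,k=1}^n\mathrm{Tr}(P_jP_k)=n+n(n-1)\gamma^2.
\end{align*}

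The decisive step is a Cauchy--Schwarz inequality applied to the eigenvalues of $S$: because $S$ acts on a $d$-dimensional space and is positive semidefinite with eigenvalues $\lambda_1,\dots,\lambda_d\geq 0$, one has $(\mathrm{Tr}\,S)^2=\bigl(\sum_i\lambda_i\bigr)^2\leq d\sum_i\lambda_i^2=d\,\mathrm{Tr}(S^2)$. Substituting the two moments gives $n^2\leq d\bigl(n+n(n-1)\gamma^2\bigr)$; dividing by $n$ and rearranging produces exactly $n(1-d\gamma^2)\leq d(1-\gamma^2)$. The ``in particular'' clause is then immediate: when $\gamma<1/\sqrt{d}$ the factor $1-d\gamma^2$ is strictly positive, so dividing through preserves the direction of the inequality and yields \eqref{LS}.

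I do not anticipate a serious obstacle; the whole content lies in choosing the right object, namely the sum $S$ of the rank-one projections, and in recognizing that dimension enters only through the single estimate $(\mathrm{Tr}\,S)^2\leq d\,\mathrm{Tr}(S^2)$. The one point demanding a little care is the uniform verification of $\mathrm{Tr}(P_jP_k)=\abs{\langle\tau_j,\tau_k\rangle}^2$ across the real and complex cases, which follows by applying cyclicity of the trace to $\tau_j\tau_j^*\tau_k\tau_k^*$ and collapsing the inner scalar factors.
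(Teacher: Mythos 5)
Your proposal is correct and is essentially the argument the paper uses (in coordinate form) for its modular analogue, Theorem \ref{MVS}: the quantity $B=\sum_{j,k}\langle\tau_j,\tau_k\rangle\langle\tau_k,\tau_j\rangle$ there is exactly your $\mathrm{Tr}(S^2)$ for the frame operator $S=\sum_j\tau_j\tau_j^*$, and the paper's two displayed inequalities amount to your single estimate $(\mathrm{Tr}\,S)^2\leq d\,\mathrm{Tr}(S^2)$, obtained via Cauchy--Schwarz on the diagonal entries rather than on the eigenvalues. The two routes are interchangeable here.
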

\begin{theorem} \cite{WALDONBOOK} \label{GBT}
	(\textbf{Gerzon Universal Bound}) 
	\begin{enumerate}[\upshape(i)]
		\item 
		Let $\{\tau_j\}_{j=1}^n$  	be  $\gamma$-equiangular  lines in $\mathbb{R}^d$. 
		Then 
		\begin{align}\label{GB}
			n \leq \frac{d(d+1)}{2}.
		\end{align}	
		\item 
		Let $\{\tau_j\}_{j=1}^n$  	be  $\gamma$-equiangular  lines in $\mathbb{C}^d$. 
		Then 
		\begin{align}\label{CGB}
			n \leq d^2.
		\end{align}	
	\end{enumerate}
\end{theorem}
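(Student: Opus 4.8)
The plan is to prove both bounds using the standard "Gram matrix / rank argument" that is the classical route to Gerzon's theorem, adapted to whatever noncommutative setting the paper has in mind. Since the statement being proved (Theorem~\ref{GBT}) is in fact the classical commutative Gerzon bound, I will sketch the textbook proof and flag where the noncommutative generalization would diverge.

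First I would associate to each line $\tau_j$ the rank-one orthogonal projection $P_j = \tau_j \tau_j^*$ (equivalently $\tau_j\langle \tau_j,\cdot\rangle$), which lives in the real vector space $H_d$ of Hermitian $d\times d$ matrices. The key computation is to evaluate the Hilbert--Schmidt inner product $\langle P_j, P_k\rangle_{HS} = \operatorname{tr}(P_jP_k) = |\langle \tau_j,\tau_k\rangle|^2$. Hence the projections satisfy $\operatorname{tr}(P_j^2)=1$ for all $j$ and $\operatorname{tr}(P_jP_k)=\gamma^2$ for $j\neq k$; in particular, when $\gamma<1$ the Gram matrix of the family $\{P_j\}_{j=1}^n$ equals $(1-\gamma^2)I_n + \gamma^2 J_n$, where $J_n$ is the all-ones matrix. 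This matrix is positive definite (its eigenvalues are $1-\gamma^2$ with multiplicity $n-1$ and $1+(n-1)\gamma^2$), so the projections are linearly independent in $H_d$.

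Next I would bound $n$ by the dimension of the ambient space in which the $P_j$ live. Over $\mathbb{C}$, the space of Hermitian $d\times d$ matrices has real dimension $d^2$, giving immediately $n\le d^2$, which is bound~\eqref{CGB}. Over $\mathbb{R}$, the $P_j$ are real symmetric matrices, and the space of symmetric $d\times d$ matrices has dimension $\tfrac{d(d+1)}{2}$, yielding $n\le \tfrac{d(d+1)}{2}$, which is bound~\eqref{GB}. So the real content is the linear-independence step, and it reduces entirely to the positive-definiteness of the Gram matrix computed above.

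The main obstacle, and where I expect the paper's actual contribution to lie, is the passage to the \emph{noncommutative} formulation: there the scalars $\langle\tau_j,\tau_k\rangle$ are replaced by elements of a $C^*$-algebra $\mathcal{A}$ and the vectors $\tau_j$ live in a Hilbert $C^*$-module, so "dimension of the space of Hermitian matrices" must be replaced by an appropriate rank or module-theoretic count, and "$|\langle\tau_j,\tau_k\rangle|=\gamma$" by a condition on the $\mathcal A$-valued inner products. The delicate points will be (i) defining the analogue of $P_j=\tau_j\tau_j^*$ and verifying it is a self-adjoint idempotent in the relevant matrix algebra over $\mathcal A$, (ii) showing that the Gram matrix $(1-\gamma^2)I_n+\gamma^2 J_n$ argument survives when its entries are $\mathcal A$-valued, which requires the positivity theory of $C^*$-algebras (faithful positive maps, or a trace) rather than ordinary real positive-definiteness, and (iii) identifying the correct noncommutative replacement for the counting dimensions $d^2$ and $d(d+1)/2$. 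For the classical statement as written, however, none of this is needed and the rank bound above is a complete proof.
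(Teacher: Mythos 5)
Your proof is correct, and it is the standard route to Gerzon's bound; but note that the paper does not actually prove Theorem \ref{GBT} at all --- it is quoted from the literature (\cite{WALDONBOOK}) as motivation, so there is no in-paper proof to compare against. The closest in-paper analogue is the proof of the Modular Gerzon Universal Bound in Section 2, which follows the same skeleton you describe (linear independence of the rank-one operators $\tau_j\otimes\tau_j$, then a dimension/rank count giving $d^2$), but it establishes linear independence differently: instead of computing the Hilbert--Schmidt Gram matrix $(1-\gamma^2)I_n+\gamma^2 J_n$ and invoking positive-definiteness --- which is not available when the entries are elements of a noncommutative C*-algebra --- the paper applies the operators to each $\tau_k$, takes inner products, and solves directly for the coefficients, using invertibility of $1-a$ and of $(n-1)a+1$. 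Your anticipation of the noncommutative obstacles in the last paragraph is accurate, and the paper's resolution is essentially your point (ii) done by hand. One small caveat worth flagging: your argument (correctly) needs $\gamma<1$ for the Gram matrix to be positive definite, whereas the theorem as stated in the paper allows $\gamma\in[0,1]$; for $\gamma=1$ the bound fails for trivial reasons (e.g.\ $\tau$ and $-\tau$ in $\mathbb{R}^1$), so the implicit hypothesis that the lines are distinct is doing real work.
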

Bounds in Inequalities  (\ref{LS}),  (\ref{GB}) and (\ref{CGB}) are  improved for various special values of $\gamma$, $n$ and $d$ (including asymptotic). Regarding Problem (\ref{2}), various special cases are solved but full generality remains open. We refer \cite{BUKH, JIANGTIDORYAOZHANGZAO, BALLADRAXLERKEEVASHSUDAKOV, BARGYU, DECAEN, MALOZEMOPEVNYI, NEUMAIER, JIANGPOLYANSKII, DELAATMACHADODEOLIVEIRAFERNANDAVELLENTIN, GREAVESKOOLENMUNEMASASZOLLOSO, GREAVSSYATRIADIYATSYNA, OKUDAYU, YU, KINGTANG, CONWAYHARDINSLOANE, GREAVESSYATRIADI2, GREAVESSYATRIADIYATSYNA2, SEIDELHANDBOOK, GREAVES7, KOORNWINDER, LINYU, LINYU2, ZHAO, COUTINHOGODSILSHIRAZIZHAN}
 for a look on these achievements in the real case and \cite{JEDWABWIEBE, JEDWABWIEBE2, GODSILROY, SUSTIKTROPPDHILLON, STROHMER, HOFFMANSOLAZZO, BROUWERHAEMERS}
    in the complex case. In the real case, it is known that equality is not always attained for  every $d$ in Inequality (\ref{GB}).   Celebrated Zauner conjecture says that  there exists a   $d^2$ $\frac{1}{\sqrt{d+1}}$-equiangular lines in $\mathbb{C}^d$ for every $d$ satisfying equality in Inequality (\ref{CGB}) \cite{ZAUNER, RENESBLUMEKOHOUTSCOTTCAVES, APPLEBY, APPLEBYBENGTSSONFLAMMIAGOYENECHE, KOPP, APPLEBYFLAMMIAMCCONNELLYARD, SCOTT}.

We wish to record that there is  a vector space version of equiangular lines \cite{GREAVESIVERSONJASPERMIXON2, GREAVESIVERSONJASPERMIXON}  and  there is also a notion of  equiangular subspaces of Euclidean spaces \cite{BALLASUDAKOV, BALLADRAXLERKEEVASHSUDAKOV2}. Recently, p-adic version of equiangular lines have been introduced \cite{MAHESH}.

As noncommutative geometry plays major role in modern Mathematics, our fundamental objective is to initiate a study of noncommutative equiangular lines. As most important results in the theory of equiangular lines are van Lint-Seidel relative and Gerzon universal bounds, we derive noncommutative versions of them. 

\section{Noncommutative Equiangular Lines}
We want the notion of Hilbert C*-modules. They are first introduced by Kaplansky \cite{KAPLANSKY} for modules over commutative C*-algebras and later developed for modules over arbitrary C*-algebras by Paschke  \cite{PASCHKE} and Rieffel \cite{RIEFFEL}. A friendly introduction is given in the book \cite{WEGGEOLSEN}. We want only standard Hilbert C*-module. Let $\mathcal{A}$ be a  unital C*-algebra with identity $1$.
 For $d\in \mathbb{N}$, let $\mathcal{A}^d$ be the standard left Hilbert C*-module over $\mathcal{A}$ with inner product 
\begin{align*}
	\langle (a_j)_{j=1}^d, (b_j)_{j=1}^d\rangle \coloneqq \sum_{j=1}^{d}a_jb_j^*, \quad \forall (a_j)_{j=1}^d, (b_j)_{j=1}^d \in \mathcal{A}^d.
\end{align*}
Hence norm on $\mathcal{A}^d$ is 
\begin{align*}
	\|(a_j)_{j=1}^d\|_2\coloneqq \left\|\sum_{j=1}^{d}a_ja_j^*\right\|^\frac{1}{2}, \quad \forall (a_j)_{j=1}^d \in \mathcal{A}^d.
\end{align*}
 We  introduce two notions of equiangular lines for modules.
\begin{definition}\label{A}
Let $a \in \mathcal{A}$ be a positive	element.   A collection $\{\tau_j\}_{j=1}^n$ in  $\mathcal{A}^d$  is said to be \textbf{modular $a$-equiangular  lines} if following conditions hold.
\begin{enumerate}[\upshape(i)]
	\item $	\langle \tau_j, \tau_j \rangle =1,  \forall 1\leq j \leq n$.
	\item $	\langle\tau_j, \tau_k \rangle\langle\tau_k, \tau_j \rangle=a,  \forall 	1\leq j , k \leq n , j\neq k.$
\end{enumerate}
\end{definition}
\begin{definition}
	Let $\gamma\in [0,1].$   A collection $\{\tau_j\}_{j=1}^n$ in  $\mathcal{A}^d$  is said to be \textbf{modular $\gamma$-norm-equiangular  lines} if following conditions hold.
	\begin{enumerate}[\upshape(i)]
		\item $	\langle \tau_j, \tau_j \rangle =1,  \forall 1\leq j \leq n$.
		\item $	\|\langle\tau_j, \tau_k \rangle\|=\gamma,  \forall 	1\leq j , k \leq n , j\neq k.$
	\end{enumerate}
\end{definition}
It is clear that if $\{\tau_j\}_{j=1}^n$ is modular  $a$-equiangular  lines, then $\{\tau_j\}_{j=1}^n$ is modular  $\sqrt{\|a\|}$-norm-equiangular  lines. We now derive two modular versions  of Theorem \ref{VS} for modules over commutative C*-algebras.
\begin{theorem} (\textbf{Modular van Lint-Seidel Relative Bound}) \label{MVS}
Let $\mathcal{A}$ be a commutative unital C*-algebra. 	Let $\{\tau_j\}_{j=1}^n$  	be modular  $a$-equiangular  lines in $\mathcal{A}^d$.
	Then 
	\begin{align*}
		n(1-da) \leq d(1-a).
	\end{align*}
	In particular, if $1-da$ is invertible, then 
	\begin{align*}
		n\leq \frac{d(1-a)}{1-da}\leq d \left\|\frac{1-a}{1-da}\right\|.
	\end{align*}
	\end{theorem}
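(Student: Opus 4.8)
The plan is to run the classical Gram-matrix argument while keeping every quantity $\mathcal{A}$-valued and checking that each inequality I invoke is a genuine order inequality in the commutative C*-algebra $\mathcal{A}$. Write $\tau_j=(\tau_{j1},\dots,\tau_{jd})$ and let $W\in M_{n\times d}(\mathcal{A})$ be the matrix with $(j,i)$-entry $\tau_{ji}$. Then the Gram matrix $G\coloneqq(\langle\tau_j,\tau_k\rangle)_{j,k}\in M_n(\mathcal{A})$ factors as $G=WW^*$, while $F\coloneqq W^*W\in M_d(\mathcal{A})$ plays the role of the modular frame operator. Using the $\mathcal{A}$-valued trace $\operatorname{Tr}(M)=\sum_i M_{ii}$ together with the hypotheses $\langle\tau_j,\tau_j\rangle=1$ and $\langle\tau_j,\tau_k\rangle\langle\tau_k,\tau_j\rangle=a$ for $j\neq k$, I would compute $\operatorname{Tr}(G)=n$ and $\operatorname{Tr}(G^2)=n+n(n-1)a$. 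Cyclicity of the trace applied to $G=WW^*$ and $F=W^*W$ gives $\operatorname{Tr}(F)=\operatorname{Tr}(G)=n$ and $\operatorname{Tr}(F^2)=\operatorname{Tr}(G^2)=n+n(n-1)a$; this is exactly how the ambient dimension $d$ enters, since $F$ is only a $d\times d$ matrix.

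The heart of the argument is the $\mathcal{A}$-valued Cauchy-Schwarz inequality $(\operatorname{Tr}F)^2\le d\,\operatorname{Tr}(F^2)$ for the positive matrix $F=(F_{ii'})$. I would prove it in two steps: first $\big(\sum_i F_{ii}\big)^2\le d\sum_i F_{ii}^2$, and then $\sum_i F_{ii}^2\le\sum_{i,i'}F_{ii'}F_{i'i}=\operatorname{Tr}(F^2)$. The first step is where commutativity is indispensable: the diagonal entries $F_{ii}$ are mutually commuting self-adjoint elements, so that $d\sum_i F_{ii}^2-\big(\sum_i F_{ii}\big)^2=\tfrac12\sum_{i,i'}(F_{ii}-F_{i'i'})^2$ is a sum of squares of self-adjoint elements, hence positive; the second step only adds the off-diagonal positive terms $F_{ii'}F_{i'i}=F_{ii'}F_{ii'}^*\ge 0$. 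Substituting the computed traces yields $n^2\le d\big(n+n(n-1)a\big)$, and dividing by the positive scalar $n$ and rearranging gives precisely $n(1-da)\le d(1-a)$.

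For the final assertion I would first record that $0\le a\le 1$, which follows from the module Cauchy-Schwarz inequality $\langle\tau_j,\tau_k\rangle\langle\tau_k,\tau_j\rangle\le\|\langle\tau_k,\tau_k\rangle\|\langle\tau_j,\tau_j\rangle$ and conditions (i)-(ii), so in particular $1-a\ge 0$. Assuming $1-da$ is (positive and) invertible, so that $(1-da)^{-1}\ge 0$, and using that $\mathcal{A}$ is commutative, multiplying $n(1-da)\le d(1-a)$ by $(1-da)^{-1}$ preserves the order and produces $n\le d(1-a)(1-da)^{-1}$. Finally, $b\coloneqq(1-a)(1-da)^{-1}$ is self-adjoint, hence $b\le\|b\|\,1$, which yields $n\le d\,\|(1-a)(1-da)^{-1}\|$ after taking norms of the positive element $n\cdot 1$ and using norm-monotonicity on positives. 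I expect the main obstacle to be exactly the commutative Cauchy-Schwarz step $(\operatorname{Tr}F)^2\le d\,\operatorname{Tr}(F^2)$: in a noncommutative $\mathcal{A}$ the expression $\tfrac12\sum_{i,i'}(F_{ii}-F_{i'i'})^2$ need not be positive and products of positive elements can fail to be positive, which is precisely why the statement is restricted to commutative $\mathcal{A}$; a secondary subtlety is that ``invertible'' must be read as ``positive and invertible'' for the division step to respect the order. As a consistency check, the whole bound can also be obtained via Gelfand-Naimark, writing $\mathcal{A}\cong C(X)$ and applying the scalar bound of Theorem~\ref{VS} to the unit vectors $\tau_j(x)\in\mathbb{C}^d$, which are $\sqrt{a(x)}$-equiangular for each $x\in X$.
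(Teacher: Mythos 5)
Your proof is correct and follows essentially the same route as the paper's: your chain $\operatorname{Tr}(G^2)=\operatorname{Tr}(F^2)\ge\sum_i F_{ii}^2\ge\tfrac{1}{d}\left(\operatorname{Tr}F\right)^2$ is precisely the paper's coordinate computation in matrix language (your $F_{rs}$ are the paper's sums $\sum_j a_r^{(j)}(a_s^{(j)})^*$, your sum-of-squares identity replaces its appeal to module Cauchy--Schwarz against the constant vector $(1,\dots,1)$, and dropping the off-diagonal terms $F_{ii'}F_{ii'}^*\ge 0$ is the same in both). Your added remark that the final division step needs $1-da$ to be positive and invertible, not merely invertible, is a genuine point of care that the paper's statement glosses over.
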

\begin{proof}
Let $\tau_j\coloneqq(a_1^{(j)}, a_2^{(j)}, \dots, a_d^{(j)}) $ for all $1\leq j\leq n$. Define 
	\begin{align*}
		B\coloneqq \sum_{j=1}^n\sum_{k=1}^n\langle \tau_j, \tau_k\rangle \langle \tau_k, \tau_j\rangle =n+(n^2-n)a.
	\end{align*}
Then using Cauchy-Schwarz inequality in Hilbert C*-modules, 
\begin{align*}
	B&= \sum_{j=1}^n\sum_{k=1}^n\left(\sum_{r=1}^{d}a_r^{(j)}(a_r^{(k)})^*\right)	\left(\sum_{s=1}^{d}a_s^{(k)}(a_s^{(j)})^*\right)\\
	&=\sum_{r, s=1}^{d}\left(\sum_{j=1}^na_r^{(j)}(a_s^{(j)})^*\right)\left(\sum_{k=1}^na_r^{(k)}(a_s^{(k)})^*\right)^*\\
	&=\sum_{r=1}^{d}\left(\sum_{j=1}^na_r^{(j)}(a_r^{(j)})^*\right)\left(\sum_{k=1}^na_r^{(k)}(a_r^{(k)})^*\right)^*+\sum_{r, s=1, r\neq s}^{d}\left(\sum_{j=1}^na_r^{(j)}(a_s^{(j)})^*\right)\left(\sum_{k=1}^na_r^{(k)}(a_s^{(k)})^*\right)^*\\
	&\geq \sum_{r=1}^{d}\left(\sum_{j=1}^na_r^{(j)}(a_r^{(j)})^*\right)\left(\sum_{k=1}^na_r^{(k)}(a_r^{(k)})^*\right)^*\\
	&\geq \frac{1}{d}\left(\sum_{r=1}^{d}\left(\sum_{j=1}^na_r^{(j)}(a_r^{(j)})^*\right)\cdot 1\right)\left(\sum_{s=1}^{d}1\cdot\left(\sum_{k=1}^na_s^{(k)}(a_s^{(k)})^*\right)^*\right)\\
	&=\frac{1}{d}\left(\sum_{j=1}^n\sum_{r=1}^{d}a_r^{(j)}(a_r^{(j)})^* \right)\left(\sum_{k=1}^n\sum_{s=1}^{d}a_s^{(k)}(a_s^{(k)})^*\right)\\
	&=\frac{1}{d}\left(\sum_{j=1}^{n}1\right)\left(\sum_{k=1}^{n}1\right)=\frac{n^2}{d}.
\end{align*}
Hence 
\begin{align*}
	(n-1) a+1\geq   \frac{n}{d} \implies 	d(n-1) a+d\geq n \implies d(1-a)\geq n(1-da).
\end{align*}
\end{proof}
\begin{theorem} 
Let $\mathcal{A}$ be a commutative unital C*-algebra.	Let $\{\tau_j\}_{j=1}^n$  	be modular  $\gamma$-norm-equiangular  lines in $\mathcal{A}^d$. Then 
		\begin{align*}
		n(1-d\gamma^2) \leq d(1-\gamma^2).
	\end{align*}
	In particular, if 
		\begin{align*}
		\gamma < \frac{1}{\sqrt{d}},
	\end{align*}
	then 
	\begin{align*}
		n\leq \frac{d(1-\gamma^2)}{1-d\gamma^2}.
	\end{align*}
\end{theorem}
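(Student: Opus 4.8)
The plan is to re-use, almost verbatim, the Cauchy--Schwarz computation from the proof of Theorem \ref{MVS}, exploiting the crucial feature that the lower bound obtained there never used the equiangularity hypothesis. Concretely, I would again set
\begin{align*}
	B \coloneqq \sum_{j=1}^n \sum_{k=1}^n \langle \tau_j, \tau_k\rangle \langle \tau_k, \tau_j\rangle,
\end{align*}
write $\tau_j = (a_1^{(j)}, \dots, a_d^{(j)})$, and run the identical chain of equalities together with the Hilbert C*-module Cauchy--Schwarz inequality. Since that chain invokes only condition (i), namely $\langle \tau_j, \tau_j\rangle = 1$, it delivers the operator lower bound $B \geq \frac{n^2}{d}\,1$ with no modification whatsoever.

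The new ingredient is an upper bound on $B$. First I would peel off the diagonal: the $n$ terms with $j = k$ each equal $\langle \tau_j, \tau_j\rangle\langle \tau_j, \tau_j\rangle = 1$, contributing $n\,1$. For each off-diagonal pair I would observe that
\begin{align*}
	\langle \tau_j, \tau_k\rangle \langle \tau_k, \tau_j\rangle = \langle \tau_j, \tau_k\rangle \langle \tau_j, \tau_k\rangle^*
\end{align*}
is a positive element of $\mathcal{A}$ of norm $\|\langle \tau_j, \tau_k\rangle\|^2 = \gamma^2$. The elementary C*-algebra fact that a positive element is dominated by its norm times the unit then gives $\langle \tau_j, \tau_k\rangle \langle \tau_k, \tau_j\rangle \leq \gamma^2\,1$. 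Summing the $n^2 - n$ off-diagonal contributions yields $B \leq (n + (n^2-n)\gamma^2)\,1$.

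Combining the two bounds, both of which are scalar multiples of the unit, gives the scalar inequality $\frac{n^2}{d} \leq n + (n^2 - n)\gamma^2$; dividing by $n$ and clearing denominators rearranges this into $n(1 - d\gamma^2) \leq d(1 - \gamma^2)$, exactly as in the passage from $B \geq n^2/d$ to the conclusion of Theorem \ref{MVS}. The final ``in particular'' statement is then immediate: when $\gamma < 1/\sqrt{d}$ the quantity $1 - d\gamma^2$ is a positive scalar, so it may be divided through to give $n \leq d(1-\gamma^2)/(1-d\gamma^2)$.

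I do not anticipate a serious obstacle. The only point to get right is that the $n^2 - n$ off-diagonal inner products, although no longer forced to equal a single element $a$ as in the modular $a$-equiangular setting, are nevertheless uniformly bounded above by $\gamma^2\,1$, which is precisely all that the upper bound on $B$ requires. Commutativity of $\mathcal{A}$ enters, as before, only through the rearrangement underlying the Cauchy--Schwarz lower bound, and plays no role in this new estimate.
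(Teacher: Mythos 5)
Your proposal is correct and follows essentially the same route as the paper: the paper likewise reuses the Cauchy--Schwarz lower bound $\frac{n^2}{d}\leq \sum_{j,k}\langle\tau_j,\tau_k\rangle\langle\tau_k,\tau_j\rangle$ from the proof of the modular van Lint--Seidel bound (which indeed only needs normalization), splits off the diagonal, and dominates each off-diagonal positive element $\langle\tau_j,\tau_k\rangle\langle\tau_j,\tau_k\rangle^*$ by $\|\langle\tau_j,\tau_k\rangle\|^2=\gamma^2$ times the unit. Your observation that the uniform bound $\gamma^2\,1$ replaces the single element $a$ is exactly the point of the paper's argument.
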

\begin{proof}
	By using the proof of Theorem \ref{MVS}, 
	\begin{align*}
		 \frac{n^2}{d}&\leq \sum_{j=1}^n\sum_{k=1}^n\langle \tau_j, \tau_k\rangle \langle \tau_k, \tau_j\rangle = \sum_{j,k=1, j\neq k}^n\langle \tau_j, \tau_k\rangle \langle \tau_k, \tau_j\rangle +\sum_{j=1}^n\langle \tau_j, \tau_j\rangle^2\\
		 &=\sum_{j,k=1, j\neq k}^n\langle \tau_j, \tau_k\rangle \langle \tau_k, \tau_j\rangle +n\leq \sum_{j,k=1, j\neq k}^n\|\langle \tau_j, \tau_k\rangle \|^2 +n=(n^2-n)\gamma^2+n.
	\end{align*}
\end{proof}

Proof of previous theorems used commutativity of C*-algebra. We are unable to derive noncommutative versions of them. However,  we introduce a subclass of norm-equiangular lines for which we derive modular relative bound.
\begin{definition}
	Let $\gamma\in [0,1].$   A collection $\{\tau_j\}_{j=1}^n$ in  $\mathcal{A}^d$  is said to be \textbf{special modular $\gamma$-norm-equiangular  lines} if following conditions hold.
	\begin{enumerate}[\upshape(i)]
		\item $	\langle \tau_j, \tau_j \rangle =1,  \forall 1\leq j \leq n$.
		\item $	\|\langle\tau_j, \tau_k \rangle\|=\gamma,  \forall 	1\leq j , k \leq n , j\neq k.$
		\item 
		\begin{align*}
			n^2 \leq d \sum_{j=1}^n\sum_{k=1}^n\langle \tau_j, \tau_k\rangle \langle \tau_k, \tau_j\rangle.
		\end{align*}
	\end{enumerate}
\end{definition}
\begin{theorem} (\textbf{Noncommutative van Lint-Seidel Relative Bound})
Let $\mathcal{A}$ be a  unital C*-algebra.	Let $\{\tau_j\}_{j=1}^n$  	be special modular  $\gamma$-norm-equiangular  lines in $\mathcal{A}^d$. Then 
\begin{align*}
	n(1-d\gamma^2) \leq d(1-\gamma^2).
\end{align*}
In particular, if $\gamma<\sqrt{d^{-1}}$, then 
\begin{align*}
	n\leq \frac{d(1-\gamma^2)}{1-d\gamma^2}.
\end{align*}
\end{theorem}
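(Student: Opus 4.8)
The plan is to bypass the Cauchy--Schwarz step that failed in the noncommutative setting by invoking hypothesis (iii) directly, and then to control the off-diagonal terms using nothing more than the elementary C*-inequality $b \leq \|b\|\,1$ for positive $b$. First I would set $B \coloneqq \sum_{j=1}^n \sum_{k=1}^n \langle \tau_j, \tau_k \rangle \langle \tau_k, \tau_j \rangle$, which is a positive element of $\mathcal{A}$ because each summand equals $\langle \tau_j, \tau_k\rangle \langle \tau_j, \tau_k\rangle^*$ (using $\langle \tau_k, \tau_j\rangle = \langle \tau_j, \tau_k\rangle^*$). Condition (iii) reads precisely $n^2\,1 \leq d\,B$, that is $\frac{n^2}{d}\,1 \leq B$; this is the exact analogue of the lower bound $B \geq n^2/d$ that the Cauchy--Schwarz argument produced in the commutative Theorem \ref{MVS}, now simply postulated as part of the definition.

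Next I would split $B$ into diagonal and off-diagonal parts. On the diagonal, condition (i) gives $\langle \tau_j, \tau_j\rangle = 1$, so the diagonal contributes $\sum_{j=1}^n \langle \tau_j, \tau_j\rangle^2 = n\,1$. For the off-diagonal terms, the key observation is that each $\langle \tau_j, \tau_k\rangle \langle \tau_k, \tau_j\rangle = \langle \tau_j, \tau_k\rangle \langle \tau_j, \tau_k\rangle^*$ is positive with norm $\|\langle \tau_j, \tau_k\rangle\|^2 = \gamma^2$ by condition (ii). The elementary fact that a positive element $b$ of a unital C*-algebra satisfies $b \leq \|b\|\,1$ then yields $\langle \tau_j, \tau_k\rangle \langle \tau_k, \tau_j\rangle \leq \gamma^2\,1$, and summing over the $n^2 - n$ off-diagonal pairs gives the operator inequality $B \leq \big(n + (n^2-n)\gamma^2\big)\,1$.

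Combining the two estimates produces the chain $\frac{n^2}{d}\,1 \leq B \leq \big(n + (n^2-n)\gamma^2\big)\,1$. Both ends are scalar multiples of the identity, so applying any state of $\mathcal{A}$ (or simply reading off the spectrum of the difference) collapses this to the scalar inequality $\frac{n^2}{d} \leq n + (n^2-n)\gamma^2$. Multiplying by $d/n$ and rearranging exactly as in the closing lines of Theorem \ref{MVS} gives $n(1 - d\gamma^2) \leq d(1-\gamma^2)$; dividing by $1 - d\gamma^2 > 0$ in the case $\gamma < 1/\sqrt{d}$ yields the stated bound $n \leq \frac{d(1-\gamma^2)}{1-d\gamma^2}$.

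I do not expect a genuine obstacle here, precisely because condition (iii) is engineered to supply the one ingredient---the lower bound on $B$---whose usual proof (Cauchy--Schwarz followed by the rearrangement of the double sum carried out in Theorem \ref{MVS}) breaks down when $\mathcal{A}$ is noncommutative. The only point requiring care is that the final comparison must be performed at the level of scalar multiples of the identity, where the C*-order restricts to the usual order on $\mathbb{R}$; the inequality $b \leq \|b\|\,1$ used to bound the off-diagonal terms is valid in every unital C*-algebra and needs no commutativity.
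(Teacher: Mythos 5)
Your proposal is correct and follows essentially the same route as the paper, whose entire proof is the one-line chain $n^2 \leq d\sum_{j,k}\langle \tau_j,\tau_k\rangle\langle\tau_k,\tau_j\rangle \leq d\big((n^2-n)\gamma^2+n\big)$: condition (iii) supplies the lower bound, and the off-diagonal terms are dominated via $b \leq \|b\|\,1$ exactly as you describe. You have merely made explicit the diagonal/off-diagonal split and the reduction of the operator inequality between scalar multiples of $1$ to a scalar inequality, both of which the paper leaves implicit.
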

\begin{proof}
\begin{align*}
	n^2 \leq d \sum_{j=1}^n\sum_{k=1}^n\langle \tau_j, \tau_k\rangle \langle \tau_k, \tau_j\rangle\leq (n^2-n)\gamma^2+n.
\end{align*}	
\end{proof}
Next we derive modular Gerzon bound for modules over C*-algebras with invariant basis number  (IBN) property. These C*-algebras are studied in \cite{GIPSON}.
\begin{theorem} (\textbf{Modular Gerzon Universal Bound}) 
Let $\mathcal{A}$ be a  unital C*-algebra with IBN. Let $\{\tau_j\}_{j=1}^n$  	be  modular $a$-equiangular  lines in $\mathcal{A}^d$. If $1-a$ is invertible, then
		\begin{align}\label{MG}
			n \leq d^2.
		\end{align}	
\end{theorem}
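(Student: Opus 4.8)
The plan is to mirror the classical complex Gerzon argument, in which the $n$ rank-one projections are linearly independent inside the $d^2$-dimensional space of Hermitian matrices; the modular replacement for ``$\mathbb{C}$-linear independence in $\mathbb{C}^{d^2}$'' will be ``spanning a free, complemented $\mathcal{A}$-submodule of rank $n$ inside a free Hilbert $\mathcal{A}$-module of rank $d^2$,'' and the replacement for the dimension count will be the IBN property. First I would attach to each line the adjointable operator $\Theta_j:=\theta_{\tau_j,\tau_j}\in\mathcal{K}(\mathcal{A}^d)$, where $\theta_{x,y}(z)=\langle z,y\rangle x$. Because $\langle\tau_j,\tau_j\rangle=1$, one computes $\Theta_j^{\ast}=\theta_{\tau_j,\tau_j}=\Theta_j$ and $\Theta_j^2=\theta_{\langle\tau_j,\tau_j\rangle\tau_j,\tau_j}=\Theta_j$, so each $\Theta_j$ is a self-adjoint idempotent in $\mathcal{K}(\mathcal{A}^d)\cong M_d(\mathcal{A})$.

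Next I would view $M_d(\mathcal{A})$ as the standard free Hilbert $\mathcal{A}$-module of rank $d^2$ with inner product $\langle\langle S,T\rangle\rangle:=\mathrm{Tr}(ST^{\ast})$, where $\mathrm{Tr}\colon\mathcal{K}(\mathcal{A}^d)\to\mathcal{A}$ is the canonical trace determined by $\mathrm{Tr}(\theta_{x,y})=\langle y,x\rangle$; the elementary operators $\{\theta_{e_r,e_s}\}_{r,s=1}^d$ are then orthonormal, confirming that the rank is $d^2$. The crucial step is to compute the Gram matrix $G\in M_n(\mathcal{A})$, $G_{jk}:=\langle\langle\Theta_j,\Theta_k\rangle\rangle=\mathrm{Tr}(\Theta_j\Theta_k)$. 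From $\Theta_j\Theta_k=\theta_{\langle\tau_k,\tau_j\rangle\tau_j,\tau_k}$ one obtains $G_{jk}=\langle\tau_k,\tau_j\rangle\langle\tau_j,\tau_k\rangle$ for $j\neq k$ and $G_{jj}=\langle\tau_j,\tau_j\rangle=1$. Here lies the observation that circumvents the noncommutativity which defeated the earlier relative bounds: condition (ii) of Definition \ref{A} is imposed for every ordered pair, so applying it to both $(j,k)$ and $(k,j)$ shows that $c:=\langle\tau_j,\tau_k\rangle$ is normal with $cc^{\ast}=c^{\ast}c=a$, and in particular $G_{jk}=c^{\ast}c=a$. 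Hence $G=(1-a)I_n+aJ_n$, where $J_n$ is the all-ones matrix over $\mathcal{A}$.

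Then I would show that $G$ is invertible. Since $a$ commutes with $I_n$ and $J_n$ and $J_n^2=nJ_n$, one verifies directly that $(1-a)^{-1}\bigl(I_n-a\,(1+(n-1)a)^{-1}J_n\bigr)$ inverts $G$: the factor $1-a$ is invertible by hypothesis, while $1+(n-1)a$ is invertible because $a\geq 0$ forces its spectrum into $[1,\infty)$. Invertibility of $G=T^{\ast}T$, where $T\colon\mathcal{A}^n\to M_d(\mathcal{A})$ sends the $j$-th basis vector to $\Theta_j$, makes $T$ bounded below with left inverse $G^{-1}T^{\ast}$; thus $T$ is a split injection, its range is orthogonally complemented, and $\{\Theta_j\}_{j=1}^n$ spans a free $\mathcal{A}$-submodule of rank $n$ that is a direct summand, giving $\mathcal{A}^{d^2}\cong M_d(\mathcal{A})\cong\mathcal{A}^n\oplus Q$.

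Finally I would invoke the IBN property of $\mathcal{A}$, as developed in \cite{GIPSON}, to conclude $n\leq d^2$ from the realization of $\mathcal{A}^n$ as a direct summand of $\mathcal{A}^{d^2}$. I expect this last passage to be the main obstacle. In the classical setting the bound is an immediate real or complex dimension count, but over a general unital C*-algebra one must extract a numerical inequality from a module decomposition, and the bare form of IBN, namely $\mathcal{A}^p\cong\mathcal{A}^q\Rightarrow p=q$, is a priori weaker than the rank-type statement ``$\mathcal{A}^n$ a summand of $\mathcal{A}^{d^2}\Rightarrow n\leq d^2$'' that the argument needs. Identifying precisely which consequence of IBN (in the sense of \cite{GIPSON}) delivers this inequality is where I would concentrate the care; the remaining steps are the formal computations above together with the normality observation that collapses the Gram matrix to $(1-a)I_n+aJ_n$.
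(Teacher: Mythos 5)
Your computations are all correct, and your overall strategy coincides with the paper's: pass to the rank-one operators $\tau_j\otimes\tau_j$ inside $M_d(\mathcal{A})\cong\mathcal{A}^{d^2}$ and then count using IBN. The middle step, however, is genuinely different. The paper proves plain $\mathcal{A}$-linear independence directly: assuming $\sum_j c_j(\tau_j\otimes\tau_j)=0$, applying the sum to $\tau_k$ and pairing with $\tau_k$ yields $\sum_j c_j a + c_k(1-a)=0$, so all the $c_k$ equal a common element $c$, and substituting back gives $c\bigl((n-1)a+1\bigr)=0$, whence $c=0$ because $a\geq 0$ makes $(n-1)a+1$ invertible. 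You instead compute and invert the Gram matrix $(1-a)I_n+aJ_n$; this consumes exactly the same two invertibility inputs ($1-a$ by hypothesis, $1+(n-1)a$ by positivity of $a$), but it buys the strictly stronger conclusion that the $\Theta_j$ span a complemented free rank-$n$ direct summand, i.e.\ $\mathcal{A}^{d^2}\cong\mathcal{A}^n\oplus Q$. That strengthening matters precisely at the final step you flag: your version needs only a rank-condition consequence of IBN ($\mathcal{A}^n$ a summand of $\mathcal{A}^{d^2}$ forces $n\leq d^2$), whereas the paper's bare injection $\mathcal{A}^n\hookrightarrow\mathcal{A}^{d^2}$ would need the even stronger ``strong rank condition.'' So the obstacle you identify is genuine, but it is present --- unacknowledged and in a worse form --- in the paper's own proof, which simply asserts that linear independence together with IBN gives $n\leq d^2$. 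The natural way to close it in your setup is to pass to $K_0$: your splitting gives $d^2[1]=n[1]+[Q]$ with $[Q]$ in the positive cone, so any positive homomorphism $K_0(\mathcal{A})\to\mathbb{R}$ sending $[1]$ to $1$ yields $d^2\geq n$; whether the IBN hypothesis guarantees such a functional is exactly the point to extract from \cite{GIPSON}. In short: your route is not the paper's, it is a more robust variant of it, and your concluding caveat identifies a real gap shared by both arguments rather than a defect specific to yours.
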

\begin{proof}
For $1\leq j \leq n$, define 
\begin{align*}
	\tau_j \otimes \tau_j: \mathcal{A}^d \ni x \mapsto(\tau_j \otimes \tau_j)x\coloneqq\langle x, \tau_j\rangle \tau_j \in \mathcal{A}^d.
\end{align*}
We show that the collection $\{\tau_j \otimes \tau_j\}_{j=1}^n$   is linearly independent  over  $\mathcal{A}$. Let $c_1, \dots,  c_n \in \mathcal{A}$ be such that 
\begin{align}\label{12}
	\sum_{j=1}^{n}c_j(\tau_j \otimes \tau_j)=0.
\end{align}
Let $1\leq k \leq n$ be fixed. Then previous equation gives 
\begin{align*}
		0=\sum_{j=1}^{n}c_j(\tau_j \otimes \tau_j)\tau_k=\sum_{j=1}^{n}c_j\langle \tau_k, \tau_j\rangle \tau_j.
\end{align*}
Taking inner product with $\tau_k$ gives 
\begin{align*}
	0&=\sum_{j=1}^{n}c_j\langle \tau_k, \tau_j\rangle \langle \tau_j, \tau_k \rangle=\sum_{j=1, j\neq k}^nc_j\langle \tau_k, \tau_j\rangle \langle \tau_j, \tau_k\rangle +c_k\langle \tau_k, \tau_k\rangle^2\\
	&=\sum_{j=1, j\neq k}^nc_ja+c_k=\sum_{j=1}^nc_ja-c_ka+c_k=\sum_{j=1}^nc_ja+c_k(1-a).
\end{align*}
Now define 
\begin{align*}
	c\coloneqq \frac{-1}{1-a}\sum_{j=1}^{n}c_ja.
\end{align*}
Then $c_k=c$ for all $1\leq k \leq n$. We wish to show that $c=0$. Now Equation (\ref{12}) gives 
\begin{align*}
	0=	\sum_{j=1}^{n}c(\tau_j \otimes \tau_j).
\end{align*}
Let $1\leq k \leq n$ be fixed. Then previous equation gives 
\begin{align*}
	0=\sum_{j=1}^{n}c(\tau_j \otimes \tau_j)\tau_k=\sum_{j=1}^{n}c\langle \tau_k, \tau_j\rangle \tau_j.
\end{align*}
Taking inner product with $\tau_k$ gives 
\begin{align*}
	0&=\sum_{j=1}^{n}c\langle \tau_k, \tau_j\rangle \langle \tau_j, \tau_k \rangle=\sum_{j=1, j\neq k}^nc\langle \tau_k, \tau_j\rangle \langle \tau_j, \tau_k\rangle +c\langle \tau_k, \tau_k\rangle^2\\
	&=\sum_{j=1, j\neq k}^nca+c=(n-1)ca+c=c((n-1)a+1).
\end{align*}
Since $a\geq0$, $((n-1)a+1)$ is invertible, hence $c=0$. Note that  the matrix of $\tau_j \otimes \tau_j$ is $\tau_j \tau_j^*$ (viewing $\tau_j$ as column vector). The rank of $d$ by $d$ matrices over $\mathcal{A}$ is $d^2$. Since $\{\tau_j \otimes \tau_j\}_{j=1}^n$ is linearly independent and $\mathcal{A}$ has IBN property, $n\leq d^2$. 
\end{proof}
We can slightly generalize Definition \ref{A}. In the vector space case, such a generalization was done by Greaves,  Iverson, Jasper and Mixon \cite{GREAVESIVERSONJASPERMIXON}.
\begin{definition}\label{AB}
Let $a, b \in \mathcal{A}$ be  positive elements.   A collection $\{\tau_j\}_{j=1}^n$ in  $\mathcal{A}^d$  is said to be \textbf{modular $(a,b)$-equiangular  lines} if following conditions hold.
\begin{enumerate}[\upshape(i)]
	\item $	\langle \tau_j, \tau_j \rangle =b,  \forall 1\leq j \leq n$.
	\item $	\langle\tau_j, \tau_k \rangle\langle\tau_k, \tau_j \rangle=a,  \forall 	1\leq j , k \leq n , j\neq k.$
\end{enumerate}	
\end{definition}
Note that $b$ is necessarily positive but need not be invertible. Thus we may not able to reduce Definition \ref{AB} to Definition \ref{A}. By modifying earlier proofs, we easily get following theorems. 
\begin{theorem}
Let $\mathcal{A}$ be a commutative unital C*-algebra. 	Let $\{\tau_j\}_{j=1}^n$  	be modular  $(a,b)$-equiangular  lines in $\mathcal{A}^d$.
Then 
\begin{align*}
	n(b^2-da) \leq d(b-a).
\end{align*}
In particular, if $b^2-da$ is invertible, then 
\begin{align*}
	n\leq \frac{d(b-a)}{b^2-da}\leq d \left\|\frac{b-a}{b^2-da}\right\|.
\end{align*}	
\end{theorem}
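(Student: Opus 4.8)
The plan is to run the argument of Theorem \ref{MVS} essentially verbatim, the only change being that the diagonal inner products now equal $b$ instead of $1$. Writing $\tau_j = (a_1^{(j)},\dots,a_d^{(j)})$ and setting $B := \sum_{j=1}^n\sum_{k=1}^n \langle \tau_j,\tau_k\rangle\langle\tau_k,\tau_j\rangle$, I would first read off the value of $B$ from Definition \ref{AB}: the $n$ diagonal terms each equal $\langle\tau_j,\tau_j\rangle^2 = b^2$ and the $n^2-n$ off-diagonal terms each equal $a$, so that
\begin{align*}
B = nb^2 + (n^2-n)a.
\end{align*}

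Next I would reproduce the lower bound for $B$. Setting $M_{rs} := \sum_{j=1}^n a_r^{(j)}(a_s^{(j)})^*$ and using commutativity to reorder the scalar factors exactly as in Theorem \ref{MVS}, one gets $B = \sum_{r,s=1}^d M_{rs}M_{rs}^* \ge \sum_{r=1}^d M_{rr}M_{rr}^*$ after discarding the positive off-diagonal summands, followed by the Cauchy--Schwarz inequality in Hilbert C*-modules against the constant vector $1$, giving $\sum_{r=1}^d M_{rr}M_{rr}^* \ge \tfrac1d\big(\sum_{r=1}^d M_{rr}\big)\big(\sum_{r=1}^d M_{rr}\big)^*$. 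The only quantity that changes from the $b=1$ case is $\sum_{r=1}^d M_{rr} = \sum_{j=1}^n\langle\tau_j,\tau_j\rangle = nb$, so the chain ends at $B \ge \tfrac1d (nb)(nb)^* = \tfrac{n^2}{d}b^2$ (using $b^*=b$).

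Combining the two facts, $nb^2 + (n^2-n)a \ge \tfrac{n^2}{d}b^2$; dividing by $n$ and multiplying by $d$ gives $db^2 + d(n-1)a \ge nb^2$, that is
\begin{align*}
n(b^2-da) \le d(b^2-a).
\end{align*}
To reach the stated right-hand side $d(b-a)$ I would invoke $b^2 \le b$, valid precisely when $0\le b\le 1$; under this normalization $b^2-a \le b-a$, hence $n(b^2-da)\le d(b^2-a)\le d(b-a)$. For the ``in particular'' clause, assuming $b^2-da$ is positive and invertible, I multiply through by the positive element $(b^2-da)^{-1}$ (which preserves the order in the commutative algebra) to get $n\le d(b-a)(b^2-da)^{-1}$, and finally bound this self-adjoint element above by its norm times $1$.

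The main obstacle is exactly the passage from $b^2-a$ to $b-a$. The Cauchy--Schwarz machinery unavoidably produces $\tfrac{n^2}{d}b^2$, so the method yields $d(b^2-a)$ and nothing smaller; replacing $b^2$ by $b$ on the right is legitimate only when $b\le 1$. Without that hypothesis the stated bound can fail: for $\mathcal{A}=\mathbb{C}$ and $d=2$, take the three lines of $\mathbb{R}^2$ at $60^\circ$ rescaled so that $\langle\tau_j,\tau_j\rangle = b = 4$, which forces $a=4$; then $b^2-da = 8$ and $b-a = 0$, so $n(b^2-da)=24$ while $d(b-a)=0$. I would therefore either record the extra hypothesis $\|b\|\le 1$ explicitly or state the sharper conclusion $n(b^2-da)\le d(b^2-a)$, which needs no extra assumption and specializes to Theorem \ref{MVS} at $b=1$.
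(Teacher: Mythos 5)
Your argument is correct and is precisely the modification of Theorem \ref{MVS} that the paper intends (the paper prints no proof for this theorem, saying only that it follows ``by modifying earlier proofs''), but you have also caught a genuine error in the statement itself. As you compute, the diagonal terms of $B$ contribute $nb^2$, the off-diagonal terms contribute $(n^2-n)a$, and the Cauchy--Schwarz chain bottoms out at $\frac{n^2}{d}\,b^2$, so the method delivers $n(b^2-da)\le d(b^2-a)$; the printed right-hand side $d(b-a)$ is reachable only via $b^2\le b$, i.e.\ under the unstated normalization $b\le 1$. Your counterexample is valid: the three lines of $\mathbb{R}^2$ at $60^\circ$ rescaled so that $\langle\tau_j,\tau_j\rangle=4$ satisfy Definition \ref{AB} with $n=3$, $d=2$, $a=b=4$, and $n(b^2-da)=24>0=d(b-a)$, while the corrected bound holds there with equality, $d(b^2-a)=24$. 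Two consistency checks support your correction: at $b=1$ the two right-hand sides coincide, which is exactly how Theorem \ref{MVS} masks the discrepancy, and the paper's own $(a,b)$-version of the Gerzon bound is phrased in terms of $b^2-a$ and $(n-1)a+b^2$ --- precisely the quantities your derivation produces. Your smaller points are also handled carefully: you use $b^*=b$ (legitimate, since $b\ge 0$) to get $(nb)(nb)^*=n^2b^2$, and in the ``in particular'' step you correctly note that invertibility of $b^2-da$ must be supplemented by positivity before one can multiply by $(b^2-da)^{-1}$ and preserve the order --- the same implicit assumption the paper makes for $1-da$ in Theorem \ref{MVS}. Either of your proposed repairs is appropriate; stating the sharper conclusion $n(b^2-da)\le d(b^2-a)$ is preferable, since it requires no extra hypothesis, specializes correctly at $b=1$, and your example shows it is tight.
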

\begin{theorem}
	Let $\mathcal{A}$ be a  unital C*-algebra with IBN. Let $\{\tau_j\}_{j=1}^n$  	be  modular $(a,b)$-equiangular  lines in $\mathcal{A}^d$. If $b^2-a$ and $(n-1)a+b^2$ are invertible, then
	\begin{align*}
		n \leq d^2.
	\end{align*}	
\end{theorem}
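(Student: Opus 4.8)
The plan is to imitate the proof of the Modular Gerzon Universal Bound, the only change being that the normalization $\langle \tau_j, \tau_j\rangle = 1$ is replaced by $\langle \tau_j, \tau_j\rangle = b$, so that each diagonal contribution becomes $b^2$ rather than $1$. First I would attach to each line the operator $\tau_j \otimes \tau_j : \mathcal{A}^d \ni x \mapsto \langle x, \tau_j\rangle \tau_j \in \mathcal{A}^d$, whose matrix is $\tau_j\tau_j^*$ (viewing $\tau_j$ as a column vector), and aim to show that $\{\tau_j \otimes \tau_j\}_{j=1}^n$ is linearly independent over $\mathcal{A}$. Since the module of $d \times d$ matrices over $\mathcal{A}$ is free of rank $d^2$ and $\mathcal{A}$ has IBN, linear independence of $n$ such elements forces $n \leq d^2$, which is exactly the desired bound.

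The core is therefore the linear-independence argument, carried out in two stages exactly as before. Starting from $\sum_{j=1}^n c_j(\tau_j\otimes\tau_j) = 0$, I would apply both sides to a fixed $\tau_k$ and then pair with $\tau_k$ in the inner product; using $\langle\tau_k,\tau_j\rangle\langle\tau_j,\tau_k\rangle = a$ for $j \neq k$ and $\langle\tau_k,\tau_k\rangle^2 = b^2$, this collapses to $\sum_{j=1}^n c_j a + c_k(b^2 - a) = 0$. Invertibility of $b^2 - a$ then lets me express $c_k$ in terms of $\sum_{j=1}^n c_j a$ and $b^2-a$, and since that expression does not depend on $k$, I obtain $c_k = c$ for a single element $c \in \mathcal{A}$. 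Substituting $c_k = c$ back into the original relation and repeating the same apply-then-pair computation produces $c\bigl((n-1)a + b^2\bigr) = 0$; invertibility of $(n-1)a + b^2$ yields $c = 0$, hence every $c_j = 0$, establishing linear independence.

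The step I expect to require the most care is the bookkeeping of the two invertibility hypotheses in the noncommutative setting. Because $b$ is only assumed positive and need not be invertible, the element $(n-1)a + b^2$ is no longer automatically invertible (unlike $(n-1)a + 1$ in the unital-normalization case, where $a \geq 0$ alone sufficed), so it must be imposed as a hypothesis and invoked at precisely the final stage; likewise $b^2 - a$ takes over the role of $1 - a$ and is what forces the coefficients to be constant. One must also be attentive to the side on which the relevant one-sided inverse acts when isolating $c_k$, but no idea beyond those already present in the Modular Gerzon proof is needed, which is why the bound follows by modifying the earlier argument.
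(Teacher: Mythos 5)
Your proposal is correct and follows exactly the route the paper intends: the paper gives no separate proof for the $(a,b)$ case, stating only that it follows ``by modifying earlier proofs,'' and your modification --- replacing $\langle\tau_k,\tau_k\rangle^2=1$ by $b^2$ so that $b^2-a$ plays the role of $1-a$ and $(n-1)a+b^2$ the role of $(n-1)a+1$, the latter now requiring an explicit invertibility hypothesis --- is precisely the intended adaptation of the Modular Gerzon argument. Your attention to the side on which $(b^2-a)^{-1}$ acts is a sensible precaution, and the rest (linear independence of $\{\tau_j\otimes\tau_j\}_{j=1}^n$ plus IBN giving $n\leq d^2$) matches the paper's proof verbatim.
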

Similar to scalar  equiangular lines and Zauner conjecture, we formulate following problem.
\begin{problem}\label{NZ}
Let $\mathcal{A}$ be a  unital C*-algebra.	For which parameters $d, n \in \mathbb{N}$ and $a \in \mathcal{A}$, there exists a collection $\{\tau_j\}_{j=1}^n$  of  modular $a$-equiangular lines in $\mathcal{A}^d$? In particular, for which $d$/for all $d$, there is $d^2$ modular $\frac{1}{d+1}$-equiangular lines in $\mathcal{A}^d$ (making equality in Inequality (\ref{MG}))?
\end{problem}
Note that Problem \ref{NZ} contains Zauner conjecture (whenever $\mathcal{A}=\mathbb{C}$).

\section{Acknowledgments}
This paper has been partially developed when the author attended the workshop “Quantum groups, tensor categories and quantum field theory”, held in the University of Oslo, Norway from January 13 to 17, 2025. This event was organized by the University of Oslo, Norway and funded by the Norwegian Research Council through the “Quantum Symmetry” project.

 \bibliographystyle{plain}
 \bibliography{reference.bib}

\end{document}